\def\E{\mathbf{E}}
\def\H{\mathbf{H}}
\def\e{\mathbf{e}}
\def\h{\mathbf{h}}
\def\v{\mathbf{v}}
\def\q{\mathbf{q}}
\def\a{\mathbf{a}}
\def\b{\mathbf{b}}
\def\n{\mathbf{n}}
\def\eqref#1{(\ref{#1})}
\def\M{\mathrm{M}}
\def\C{\mathrm{C}}
\def\P{\mathrm{P}}
\def\K{\mathrm{K}}
\def\T{\mathcal{T}}
\def\RR{\mathbb{R}}
\def\dt{\partial_t}
\def\dtt{\partial_{tt}}
\def\curl{\mathop{\mbox{curl}}}
\def\tnorm{|\!|\!|}
\begin{document}

\title*{A mass-lumped mixed finite element method for Maxwell's equations}
\author{Herbert Egger \and Bogdan Radu}
\institute{H. Egger \at Dept. of Mathematics, TU Darmstadt, Dolivostra{\ss}e 15, 64293 Darmstadt, Germany \email{egger@mathematik.tu-darmstadt.de}
\and B. Radu \at Graduate School for Computational Engineering, TU Darmstadt, Dolivostra{\ss}e 15, 64293 Darmstadt, Germany \email{radu@gsc.tu-darmstadt.de}}
\maketitle

\abstract{A novel mass-lumping strategy for a mixed finite element approximation of Maxwell's equations is proposed. 
On structured orthogonal grids the resulting method coincides with the spatial discretization of the Yee scheme. 
The proposed method, however, generalizes naturally to unstructured grids and anisotropic materials and thus yields a variational extension of the Yee scheme for these situations. }

\section{Introduction}
\label{sec:intro}

We consider the propagation of electromagnetic radiation through a linear non-dispersive and non-conducting medium  described by Maxwell’s equations
\begin{eqnarray} 
\epsilon \dt \E &=& \curl \H, \label{eq:maxwell1}\\
\mu \dt \H &=& - \curl \E.    \label{eq:maxwell2}
\end{eqnarray}
Here $\E$, $\H$ denote the electric and magnetic field intensities and 
$\epsilon$, $\mu$ are the symmetric and positive definite permittivity and permeability tensors.
For ease of notation, we assume that $\E \times \n = 0$ on the boundary. 
The space discretization of \eqref{eq:maxwell1}--\eqref{eq:maxwell2} 
by standard methods leads to finite dimensional differential equations of the form
\begin{eqnarray}
\M_\epsilon \dt \e &=& \C' \h, \label{eq:sys1}\\
\M_\mu \dt \h &=&  -\C \e.               \label{eq:sys2}
\end{eqnarray}
Due to the particular structure of the system, the stability of such discretization schemes can easily be
ensured by the simple algebraic conditions 
\begin{itemize}\setlength\itemindent{1em}
 \item[(i)] $\quad \C' = \C^\top$,
 \item[(ii)] $\quad \M_\epsilon$, $\M_\mu$ are symmetric and positive definite.  
\end{itemize}
In order to enable an efficient solution of \eqref{eq:sys1}--\eqref{eq:sys2} by 
explicit time-stepping methods, one additionally has to assume that 
\begin{itemize}\setlength\itemindent{1em}
 \item[(iii)] $\quad \M_\epsilon^{-1}$, $\M_\mu^{-1}$ can be applied efficiently.
\end{itemize}
The finite difference approximation of \eqref{eq:maxwell1}--\eqref{eq:maxwell2} on staggered orthogonal grids yields approximations of the form \eqref{eq:sys1}--\eqref{eq:sys2} satisfying (i)--(iii) with diagonal $\M_\epsilon$, $\M_\mu$  \cite{Yee66}. 
Moreover, the entries $\e_i$, $\h_j$ in the solution vectors yield second order approximations for the line integrals of $\E$, $\H$ along edges of the primal and duals grid, respectively \cite{Cohen02,Weiland96}.
An extension to unstructured grids and anisotropic coefficients is possible  \cite{CodecasaPoliti08,SchuhmannWeiland98}, but these approaches rely on the use of two sets of grids which makes a rigorous convergence analysis difficult.

The finite element approximation of \eqref{eq:maxwell1}--\eqref{eq:maxwell2} yields systems of the form \eqref{eq:sys1}--\eqref{eq:sys2} satisfying (i)--(ii) automatically and a rigorous convergence analysis is possible 
in rather gerneral situations \cite{Joly03,Monk92}.
Condition (iii) is, however, not valid in general, although the matrices $\M_\epsilon$ and $\M_\mu$ are usually sparse.
This lack of efficiency can be overcome by mass-lumping, which aims at approximating $\M_\epsilon$ and $\M_\mu$ by diagonal or block-diagonal matrices; \cite{ElmkiesJoly97,CohenMonk98}. These approaches are usually based on an enrichment of the approximation spaces and appropriate quadrature; see \cite{Cohen02}.

In this paper, we present a novel mass-lumping strategy for a mixed finite element approximation of \eqref{eq:maxwell1}--\eqref{eq:maxwell2} that yields properties (i)--(iii) without such an increase of the system dimension. In special cases, the resulting scheme reduced to the staggered-grid finite difference approximation of the Yee scheme.

\section{A mass-lumped mixed finite element method}
\label{sec:2}

As a preliminary step, we consider a mass-lumped mixed finite element approximation 
based on enriched approximation spaces and numerical quadrature. 
We seek for approximations $\widetilde \E_h(t) \in \widetilde V_h$, $\widetilde \H_h(t) \in \widetilde Q_h$ satisfying
\begin{alignat}{2}
(\epsilon \dt \widetilde \E_h(t),\widetilde \v_h)_h &= (\widetilde \H_h(t), \curl \widetilde \v_h) 
\qquad &&\forall \widetilde \v_h \in \widetilde V_h,\label{eq:var1}\\
(\mu \dt \widetilde \H_h(t),\widetilde \q_h)_{h,*} &= -(\curl \widetilde \E_h(t),\widetilde \q_h)
\qquad &&\forall \widetilde \q_h \in \widetilde Q_h, \label{eq:var2}
\end{alignat}
for all $t>0$. 
Here, $\widetilde V_h \subset H_0(\curl;\Omega)$ and $\widetilde Q_h \subset L^2(\Omega)$ are appropriate finite dimensional subspaces
and $(\a,\b)_h$, $(\a,\b)_{h,*}$ are approximations for the scalar product $(\a,\b)=\int_\Omega \a(x) \cdot \b(x) \; dx$
to be defined below.

We restrict our discussion in the sequel to problems where $\E = (E_x,E_y,0)$ and $\H=(0,0,H_z)$ with $E_x,E_y,H_z$ independent of $z$, which allows to represent the fields in two dimensions. The extension to three dimensions will be discussed in Section~\ref{sec:5}. 

Let $\T_h = \{T\}$ be a conforming mesh of $\Omega$ consisting of triangles and parallelograms. Every element $T \in \T_h$ is the image $F_T(\widehat T)$ of a reference triangle or reference square under an affine mapping $F_T(\widehat x) = a_T + B_T \widehat x$ with $a_T \in \RR^2$ and $B_T \in \RR^{2 \times 2}$. 
We denote by $h$ the maximal element diameter and assume uniform shape regularity.

To every element $T_j$, $j=1,\ldots,n_T$ of the mesh, we associate a basis function $\widetilde \psi_j$ 
for the space $\widetilde Q_h$ with $\widetilde \psi_j|_{T_k} = \delta_{jk}$. 
\begin{figure}[ht!]
\centering
\begin{tikzpicture}[scale=0.55]
\draw (0,0) -- (0,3) -- (3,3) -- (3,0) -- (0,0);
\draw[thick,->] (0.2,-0.3) -- (1.2,-0.3);
\draw[thick,->] (1.8,-0.3) -- (2.8,-0.3);
\draw[thick,->] (1.2,3.3) -- (0.2,3.3);
\draw[thick,->] (2.8,3.3) -- (1.8,3.3);
\draw[thick,->] (-0.3,1.2) -- (-0.3,0.2);
\draw[thick,->] (-0.3,2.8) -- (-0.3,1.8);
\draw[thick,->] (3.3,0.2) -- (3.3,1.2);
\draw[thick,->] (3.3,1.8) -- (3.3,2.8);
\draw[thick] (1.5,1.5) circle (0.2cm);
\draw[fill] (3,0) circle (0.12cm);
\draw[fill] (0,3) circle (0.12cm);
\draw[fill] (3,3) circle (0.12cm);
\draw[fill] (0,0) circle (0.12cm);
\end{tikzpicture}
\hspace*{13em}
\begin{tikzpicture}[scale=0.6]
\draw (0,0) -- (0,3) -- (3,0) -- (0,0);
\draw[thick,->] (0.2,-0.3) -- (1.2,-0.3);
\draw[thick,->] (1.8,-0.3) -- (2.8,-0.3);
\draw[thick,->] (-0.3,1.2) -- (-0.3,0.2);
\draw[thick,->] (-0.3,2.8) -- (-0.3,1.8);
\draw[thick,->] (3.0,0.5) -- (2.2,1.3);
\draw[thick,->] (1.3,2.2) -- (0.5,3.0);

\draw[thick] (1,1) circle (0.15cm);

\draw[fill] (3,0) circle (0.12cm);
\draw[fill] (0,3) circle (0.12cm);
\draw[fill] (0,0) circle (0.12cm);
\end{tikzpicture}\\
\vspace*{1em}
\hspace*{2em}
\begin{minipage}[t]{.48\textwidth}
$\widehat\phi_{1,0}=\frac{1}{2}\binom{-y^2+y}{-2xy+2x}$,\quad $\widehat\phi_{1,1}=\frac{1}{2}\binom{y^2-y}{2xy}$,\\
$\widehat\phi_{2,0}=\frac{1}{2}\binom{-2xy}{-x^2+x}$,\quad $\widehat\phi_{2,1}=\frac{1}{2}\binom{-2y+2xy}{x^2-x}$,\\
$\widehat\phi_{3,0}=\frac{1}{2}\binom{y^2-y}{2xy-2y}$,\quad $\widehat\phi_{3,1}=\frac{1}{2}\binom{-y^2+y}{2x+2y-2xy-2}$,\\
$\widehat\phi_{4,0}=\frac{1}{2}\binom{-2x-2y+2xy+2}{x^2-x}$,\quad $\widehat\phi_{4,1}=\frac{1}{2}\binom{-2xy+2x}{-x^2+x}$.
\end{minipage}
\hspace*{2.5em}
\begin{minipage}[t]{.35\textwidth}
$\widehat\phi_{1,0}=\frac{1}{2}\binom{0}{x}$,\quad $\widehat\phi_{1,1}=\frac{1}{2}\binom{-y}{0}$,\\
$\widehat\phi_{2,0}=\frac{1}{2}\binom{-y}{-y}$,\quad $\widehat\phi_{2,1}=\frac{1}{2}\binom{0}{x+y-1}$,\\
$\widehat\phi_{3,0}=\frac{1}{2}\binom{1-x-y}{0}$,\quad $\widehat\phi_{3,1}=\frac{1}{2}\binom{x}{x}$.
\end{minipage}
\caption{Degrees of freedom and basis functions for the unit triangle and unit square. The black dots at the vertices represent the quadrature points for the quadrature formula introduced below. \label{fig:basis}}
\end{figure}
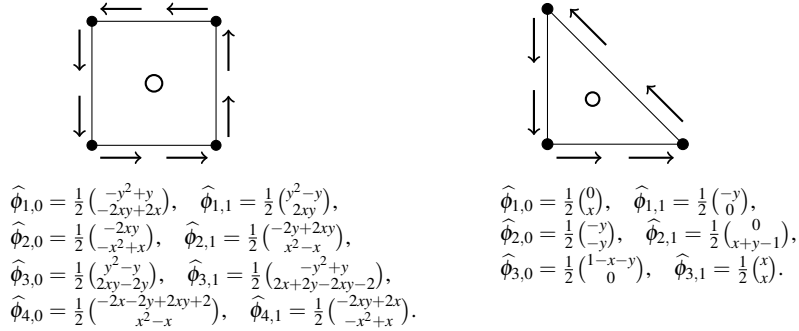
For every interior edge $e_i=T_l \cap T_r$, $i=1,\ldots,n_e$ of the mesh, we further define two basis functions $\widetilde \phi_i,\widetilde \phi_{i+n_e}$ which are defined by
\begin{equation} \label{eq:piola}
\widetilde \phi_{i+\ell\cdot n_e}|_T = B_T^{-\top} \widehat \phi_{\alpha,\gamma}, \qquad \ell=0,1,
\end{equation}
on $T \in \{T_l,T_r\}$ and vanish identically on all other elements. 
Here $\alpha \in \{1,\ldots,\widehat n_e\}$ refers to the number of the edge $e_i$ on the reference element $\widehat T$ and $\gamma \in \{0,1\}$ depends on $\ell$ and the orientation of the edge $e_i$. The functions $\widehat \phi_{\alpha,\gamma}$ are defined in Figure~\ref{fig:basis}. 
We further set $(\a,\b)_{h,*}=(\a,\b)$ and define $(\a,\b)_h = \sum\nolimits_{T} (\a,\b)_{h,T}$ with 
\begin{equation}
(\a,\b)_{h,T}=|T| \sum\nolimits_{l=1}^{\widehat n_p} \a(F_T(\widehat x_l)) \cdot \b(F_T(\widehat x_l)) \; w_l,
\end{equation}
where $\widehat x_l$, $l=1,\ldots,\widehat n_p$ denote the quadrature points and $w_l=1/\widehat n_p$ the quadrature weights on the reference element as depicted in Figure~\ref{fig:basis}.

Using the bases defined above, all functions in $\widetilde V_h$ and $\widetilde Q_h$ can be represented as 
\begin{align}
\widetilde \E_h = \sum\nolimits_i \widetilde \e_i \widetilde \phi_i + \widetilde \e_{i+n_e} \widetilde \phi_{i+n_e} 
\qquad \text{and} \qquad 
\widetilde \H_h = \sum\nolimits_j \widetilde \h_j \widetilde\psi_j.
\end{align}
This allows to rewrite the variational problem \eqref{eq:var1}--\eqref{eq:var2} in algebraic form as 
\begin{align}
\widetilde \M_\epsilon \dt \widetilde \e = \widetilde \C^\top \widetilde \h \label{eq:sys1w} \\
\widetilde \M_\mu \dt \widetilde \h = - \widetilde \C\,\widetilde \e \label{eq:sys2w}
\end{align}
with matrices $(\widetilde \M_\epsilon)_{ij} = (\epsilon \widetilde \phi_j,\widetilde \phi_i)_h$, $(\widetilde \M_\mu)_{ij} = (\mu \widetilde \psi_j,\widetilde\psi_i)$, and $(\widetilde \C)_{ij}=(\curl \widetilde\phi_j,\widetilde\psi_i)$. 
As a direct consequence of the particular choice of the basis functions, we obtain 
\begin{lemma} \label{lem:1}
Let $\widetilde \M_\epsilon$, $\widetilde \M_\mu$, and $\widetilde \C$ be defined as above. 
Then (i)--(iii) hold analogously.
\end{lemma}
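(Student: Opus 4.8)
The plan is to verify the three properties (i)--(iii) one at a time, reading off each from the construction of the basis functions and the discrete inner products. First, for property (i) I would note that the single matrix $\widetilde\C$ governs both equations \eqref{eq:sys1w} and \eqref{eq:sys2w}: by definition $(\widetilde\C)_{ij}=(\curl\widetilde\phi_j,\widetilde\psi_i)$ appears in \eqref{eq:sys2w}, while the right-hand side of \eqref{eq:sys1w} uses the pairing $(\widetilde\H_h,\curl\widetilde\v_h)$, which in coordinates is exactly $\widetilde\C^\top\widetilde\h$. Since $(\a,\b)_{h,*}=(\a,\b)$ is the exact $L^2$ product (no quadrature), the identity $\C'=\C^\top$ holds by construction, with $\C'=\widetilde\C^\top$ and $\C=\widetilde\C$; there is nothing to estimate here.

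For property (ii), symmetry of $\widetilde\M_\mu$ and $\widetilde\M_\epsilon$ follows from symmetry of $\mu$ and $\epsilon$ together with the symmetry of the (possibly quadrature-based) bilinear forms: $(\widetilde\M_\mu)_{ij}=(\mu\widetilde\psi_j,\widetilde\psi_i)=(\mu\widetilde\psi_i,\widetilde\psi_j)=(\widetilde\M_\mu)_{ji}$, and similarly for $\widetilde\M_\epsilon$ using that $(\a,\b)_{h,T}$ is a sum of pointwise products, hence symmetric, and that $\epsilon(x)$ is symmetric positive definite. Positive definiteness of $\widetilde\M_\mu$ is immediate since $\mu$ is uniformly SPD and $\{\widetilde\psi_j\}$ is a basis of $\widetilde Q_h$, so $(\mu\cdot,\cdot)$ is a genuine inner product on $\widetilde Q_h$. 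For $\widetilde\M_\epsilon$ one must check that $(\a,\a)_h>0$ for $0\neq\a=\widetilde\E_h\in\widetilde V_h$; since $\epsilon$ is uniformly SPD it suffices to show $(\a,\a)_{h,T}=|T|\sum_l |\a(F_T(\widehat x_l))|^2 w_l$ can vanish on an element only if $\a|_T\equiv 0$, i.e.\ that the quadrature points $\widehat x_l$ are a \emph{unisolvent} set for the local space $\{B_T^{-\top}\widehat\phi_{\alpha,\gamma}\}$ — equivalently, the only function in the local reference space vanishing at all the marked points in Figure~\ref{fig:basis} is the zero function. This is a finite-dimensional linear-algebra fact that one verifies directly from the explicit formulas for $\widehat\phi_{\alpha,\gamma}$ and the listed quadrature nodes (the element vertices); it is the only genuinely computational point and is the step I expect to be the main obstacle, though a routine one.

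For property (iii), the key is that the quadrature nodes are chosen to coincide with the degrees of freedom: because each $\widehat\phi_{\alpha,\gamma}$ is, up to the Piola transform $B_T^{-\top}$, supported near and essentially associated with a single edge endpoint, and because the discrete product $(\cdot,\cdot)_h$ evaluates only at those nodes, the entries $(\widetilde\M_\epsilon)_{ij}=(\epsilon\widetilde\phi_j,\widetilde\phi_i)_h$ vanish unless $\widetilde\phi_i$ and $\widetilde\phi_j$ are both nonzero at a common quadrature point. I would argue that this forces $\widetilde\M_\epsilon$ to be block-diagonal, with each block of fixed small size tied to a mesh vertex (on parallelogram/triangular meshes the blocks collect the edge-functions meeting at a node), so that $\widetilde\M_\epsilon^{-1}$ is applied blockwise at $O(n_e)$ cost; $\widetilde\M_\mu$ is already diagonal since $\widetilde\psi_j|_{T_k}=\delta_{jk}$ makes the $\{\widetilde\psi_j\}$ have pairwise disjoint supports, giving $(\widetilde\M_\mu)_{ij}=\delta_{ij}|T_i|\,\mu|_{T_i}$ (scalar $\mu$) or a $1\times1$ block. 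Assembling these three observations yields (i)--(iii) for the tilded system, completing the proof of Lemma~\ref{lem:1}.
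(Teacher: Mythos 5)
Your proposal is correct and follows essentially the same route as the paper: (i)--(ii) are read off from the construction, and (iii) rests on $\widetilde \M_\mu$ being diagonal and $\widetilde \M_\epsilon$ being block-diagonal with small vertex-associated blocks, which is exactly the structure the paper invokes (deferring the details to the cited references). You additionally spell out the two points the paper leaves implicit — the unisolvence of the vertex quadrature nodes for the local edge-element space (needed for positive definiteness of $\widetilde\M_\epsilon$) and the fact that each $\widehat\phi_{\alpha,\gamma}$ vanishes at all but one vertex (which yields the block structure) — and both identifications are accurate.
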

\begin{proof}
The properties (i)--(ii) follow directly form the construction. 
From the particular choice of basis functions, one can deduce that $\widetilde\M_\mu$ is diagonal and $\widetilde\M_\epsilon$ is block-diagonal; see \cite{EggerRadu18b,WheelerYotov06} for details. This implies conditions (iii). \qed
\end{proof}

Let us mention that the quadrature rule satisfies $(\a,\b)_{h,T}=\int_T \a(x) \cdot \b(x) \; dx$ when $\a(x)\cdot \b(x)$ is affine linear. This ensures that the method \eqref{eq:var1}--\eqref{eq:var2} also has good approximation properties. 
By a slight adoption of the results given in \cite{EggerRadu18b}, we obtain
\begin{lemma} \label{lem:2}
Let $\E$, $\H$ be a smooth solution of \eqref{eq:maxwell1}--\eqref{eq:maxwell2} and let $\widetilde \E_h(0)$ and $\widetilde \H_h(0)$ be chosen appropriately. Then 
\begin{align*}  
\|\widetilde \E_h(t) - \E(t)\| + \|\widetilde \H_h(t) - \H(t)\| \le C h
\end{align*}
for all $0 \le t \le T$ with $C=C(E,H,T)$.
Moreover, $\|\widetilde \H_h(t) - \pi_h^0 \H(t)\| \le C h^2$ where $\pi_h^0 \H$ denotes the piecewise constant approximation of $\H$ on the mesh $\T_h$. 
\end{lemma}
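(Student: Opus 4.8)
The plan is to combine the standard error splitting with the energy technique for mixed discretizations of Maxwell-type evolution problems, treating the mass-lumping in $(\cdot,\cdot)_h$ as a consistency error that is controlled by the exactness of the quadrature for affine integrands. I would first fix a commuting interpolation operator $\Pi_h$ onto $\widetilde V_h$, defined on smooth fields, with $\curl\Pi_h\v = \pi_h^0\curl\v$ and $\|\v-\Pi_h\v\| \le Ch\,\|\v\|_{H^1}$; such an operator exists since $\widetilde V_h$ contains the lowest-order edge element space and, as one checks directly from the reference basis functions in Figure~\ref{fig:basis} together with the Piola relation \eqref{eq:piola}, $\curl\widetilde V_h \subseteq \widetilde Q_h$, i.e.\ $\curl\widetilde\v_h$ is piecewise constant for every $\widetilde\v_h\in\widetilde V_h$. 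Setting $\theta_E := \widetilde\E_h - \Pi_h\E \in \widetilde V_h$, $\eta_E := \Pi_h\E - \E$, $\theta_H := \widetilde\H_h - \pi_h^0\H \in \widetilde Q_h$, and $\eta_H := \pi_h^0\H - \H$, one has $\widetilde\E_h - \E = \theta_E + \eta_E$, $\widetilde\H_h - \H = \theta_H + \eta_H$, and $\|\eta_E\| + \|\partial_t\eta_E\| + \|\eta_H\| + \|\partial_t\eta_H\| \le Ch$ by standard approximation estimates, so it remains to bound $\theta_E$ and $\theta_H$.

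Subtracting \eqref{eq:var1}--\eqref{eq:var2} from the exact weak formulation tested with $\widetilde\v_h\in\widetilde V_h$, $\widetilde\q_h\in\widetilde Q_h$, and using that $\curl\widetilde\v_h$ is piecewise constant (so $(\eta_H,\curl\widetilde\v_h)=0$) and the commuting relation (so $(\curl\eta_E,\widetilde\q_h)=(\pi_h^0\curl\E-\curl\E,\widetilde\q_h)=0$), the error equations take the form
\begin{align*}
(\epsilon\partial_t\theta_E,\widetilde\v_h)_h &= (\theta_H,\curl\widetilde\v_h) + \delta_h(\widetilde\v_h),\\
(\mu\partial_t\theta_H,\widetilde\q_h) &= -(\curl\theta_E,\widetilde\q_h) - (\mu\partial_t\eta_H,\widetilde\q_h),
\end{align*}
where $\delta_h(\widetilde\v_h) := (\epsilon\partial_t\E,\widetilde\v_h) - (\epsilon\partial_t\Pi_h\E,\widetilde\v_h)_h$ collects the interpolation error $-(\epsilon\partial_t\eta_E,\widetilde\v_h)$ and the pure quadrature error $(\epsilon\partial_t\Pi_h\E,\widetilde\v_h) - (\epsilon\partial_t\Pi_h\E,\widetilde\v_h)_h$. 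Taking $\widetilde\v_h=\theta_E$, $\widetilde\q_h=\theta_H$ and adding, the coupling terms $\pm(\theta_H,\curl\theta_E)$ cancel, and since $(\epsilon\cdot,\cdot)_h$ and $(\mu\cdot,\cdot)$ are equivalent to $\|\cdot\|^2$ on the discrete spaces uniformly in $h$ by Lemma~\ref{lem:1}, this yields
\begin{align*}
\tfrac12\,\partial_t\!\left((\epsilon\theta_E,\theta_E)_h + (\mu\theta_H,\theta_H)\right) = \delta_h(\theta_E) - (\mu\partial_t\eta_H,\theta_H).
\end{align*}
The interpolation part of $\delta_h$ is $O(h)\|\widetilde\v_h\|$ by the estimates above, and the quadrature part is $O(h)\|\widetilde\v_h\|$ by an elementwise Bramble--Hilbert estimate — the rule being exact for affine integrands — combined with the inverse inequality and the Piola scaling on $\widetilde V_h$; likewise $|(\mu\partial_t\eta_H,\theta_H)| \le Ch\,\|\theta_H\|$. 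Choosing $\widetilde\E_h(0)=\Pi_h\E(0)$, $\widetilde\H_h(0)=\pi_h^0\H(0)$ so that $\theta_E(0)=\theta_H(0)=0$, Gronwall's lemma gives $\|\theta_E(t)\| + \|\theta_H(t)\| \le Ch$, which together with the splitting proves the first estimate.

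For the superconvergent bound on $\theta_H = \widetilde\H_h - \pi_h^0\H$ I would use an Aubin--Nitsche duality argument. Let $(\v^*,q^*)$ solve the auxiliary Maxwell system on $(0,t)$ backward in time with terminal data $q^*(t)=\theta_H(t)$, $\v^*(t)=0$; elliptic regularity (on a convex or smooth $\Omega$) gives $\|\v^*\|_{H^1} + \|q^*\|_{H^1} \le C\,\|\theta_H(t)\|$ uniformly in time. Testing the error equations with $\Pi_h\v^*$ and $\pi_h^0 q^*$, integrating in time, and using the same cancellation as before together with the commuting relation (which removes $\eta_E$), the orthogonality $(\eta_H,\widetilde\q_h)=0$, and the weak identity $(\curl\theta_E,\widetilde\q_h) = -(\mu\partial_t(\theta_H+\eta_H),\widetilde\q_h)$ from the second error equation, one arrives at an expression for $\|\theta_H(t)\|^2$ made up of (a) pairings of the first-order discrete errors $\theta_E,\theta_H$ with the approximation errors $\v^*-\Pi_h\v^*$ and $q^*-\pi_h^0 q^*$ of the smooth dual solution and (b) the quadrature error evaluated at $\Pi_h\v^*$. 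Each such term carries one power of $h$ from the approximation of the dual solution (or from the $O(h)$ quadrature consistency) and one from the $O(h)$ estimate of the previous paragraph, so $\|\theta_H(t)\|^2 \le Ch^2\,\|\theta_H(t)\|$, which gives the claim. (If $\epsilon,\mu$ are piecewise constant on $\T_h$, the $\eta_H$-terms drop out and the argument is shorter.)

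The delicate point throughout is the bookkeeping of the mass-lumping error $\delta_h$: one must show that replacing $(\cdot,\cdot)$ by $(\cdot,\cdot)_h$ costs exactly one power of $h$ in the $\|\cdot\|$-dual norm on $\widetilde V_h$, and, for the duality argument, that it does not spoil the extra power of $h$. This is where exactness of the rule for affine integrands, the Piola scaling \eqref{eq:piola}, and the inverse estimates on the enriched space enter, essentially as in the analysis of \cite{EggerRadu18b}, from which the statement then follows by the minor modifications indicated above.
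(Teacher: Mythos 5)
The paper does not actually prove Lemma~\ref{lem:2}; it obtains it ``by a slight adoption of the results given in \cite{EggerRadu18b}''. Your overall architecture --- splitting the error with a commuting interpolant $\Pi_h$ satisfying $\curl \Pi_h \v=\pi_h^0\curl\v$, exploiting $\curl\widetilde V_h\subseteq\widetilde Q_h$ to kill the $\eta_H$ and $\curl\eta_E$ terms, an energy/Gronwall argument for the $O(h)$ bound, and a duality argument for the $O(h^2)$ bound on $\widetilde\H_h-\pi_h^0\H$ --- is exactly the methodology of that reference, so in spirit you are on the right track.

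There is, however, a genuine gap at the single most delicate point, namely your claim that the pure quadrature error $(\epsilon\,\dt\Pi_h\E,\widetilde\v_h)-(\epsilon\,\dt\Pi_h\E,\widetilde\v_h)_h$ is $O(h)\|\widetilde\v_h\|$ ``by an elementwise Bramble--Hilbert estimate combined with the inverse inequality''. That argument does not close. The vertex rule is exact only for affine integrands, while the integrand $\epsilon\,\dt\Pi_h\E\cdot\widetilde\v_h$ is (bi)quadratic; Bramble--Hilbert gives a factor $h_T^2|f|_{W^{2,1}(T)}$, and the inverse estimate on the second derivatives of the quadratic test function returns exactly $h_T^{-2}$, so per element you only obtain $O(1)\|\widetilde\v_h\|_{L^2(T)}$ and, after summing, no power of $h$ at all. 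Worse, the quadrature error does not even vanish for constant times a basis function on parallelograms: for $\widehat\phi_{2,0}=\frac12\binom{-2xy}{-x^2+x}$ one has $\int_{\widehat T}\widehat\phi_{2,0}-\frac14\sum_l\widehat\phi_{2,0}(\widehat x_l)=\binom{0}{1/12}\neq 0$. What saves the method is a structural cancellation of the particular basis: the elementwise quadrature error applied to $c\cdot(\widehat\phi_{\alpha,0}+\widehat\phi_{\alpha,1})$ vanishes (the defects of the two basis functions of an edge are equal and opposite), i.e.\ the error functional annihilates constants paired with the \emph{reduced} space $V_h$. The correct estimate therefore decomposes $\Pi_h\dt\E$ into its local constant/reduced part plus an $O(h)$ remainder and uses this cancellation, in the manner of the key quadrature lemmas of \cite{WheelerYotov06} and \cite{EggerRadu18b}; this is the actual heart of the proof and cannot be replaced by a generic Bramble--Hilbert argument. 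The same issue reappears in your duality step, where you need the quadrature error to contribute $O(h^2)$ when both arguments are interpolants of smooth fields; and that step additionally imports an elliptic-regularity hypothesis on $\Omega$ that is not part of the lemma's statement and should at least be made explicit.
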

\begin{remark}
For structured meshes and isotropic coefficients, one can observe second order convergence also  for line integrals of the electric field along edges of the mesh. In addition, second convergence can also obtained for unstructured meshes by a non-local post-processing strategy; see \cite{EggerRadu18b} for details.
\end{remark} 

\section{A variational extension of the Yee scheme}
\label{sec:3}

The method of the previous section already yields a stable and efficient approximation. 
We now show that one degree of freedom per edge can be saved without sacrificing the accuracy or efficiency of the method.
To this end, we construct approximations $\E_h(t) \in V_h$, $\H_h(t) \in Q_h$ in spaces 
$V_h \subset \widetilde V_h$ and $Q_h = \widetilde Q_h$.

\begin{figure}[ht!]
\centering
\begin{tikzpicture}[scale=0.55]
\draw (0,0) -- (0,3) -- (3,3) -- (3,0) -- (0,0);
\draw[thick,->] (0.5,-0.3) -- (2.5,-0.3);
\draw[thick,->] (2.5,3.3) -- (0.5,3.3) ;
\draw[thick,->] (-0.3,2.5) -- (-0.3,0.5);
\draw[thick,->] (3.3,0.5) -- (3.3,2.5);
\end{tikzpicture} 
\hspace*{13em}
\begin{tikzpicture}[scale=0.6]
\draw (0,0) -- (0,3) -- (3,0) -- (0,0);
\draw[thick,->] (0.5,-0.3) -- (2.5,-0.3);
\draw[thick,->] (-0.3,2.5) -- (-0.3,0.5);
\draw[thick,->] (2.5,1.0) -- (1.0,2.5);
\end{tikzpicture}\\
\vspace*{1em}
\begin{minipage}[t]{.35\textwidth}
$\widehat\phi_{1}=\binom{0}{x}$,\quad $\widehat\phi_{2}=\binom{-y}{0}$,\\
$\widehat\phi_{3}=\binom{0}{x-1}$,\quad $\widehat\phi_{4}=\binom{1-y}{0}$.
\end{minipage}
\hspace*{4em}
\begin{minipage}[t]{.30\textwidth}
$\widehat\phi_{1}=\frac{1}{2}\binom{-y}{x}$,\quad $\widehat\phi_{2}=\frac{1}{2}\binom{-y}{x-1}$\\
$\widehat\phi_{3}=\frac{1}{2}\binom{1-y}{x}$.
\end{minipage}
\caption{Degrees of freedom and basis functions on the unit triangle and unit square. \label{fig:basis2}}
\end{figure}
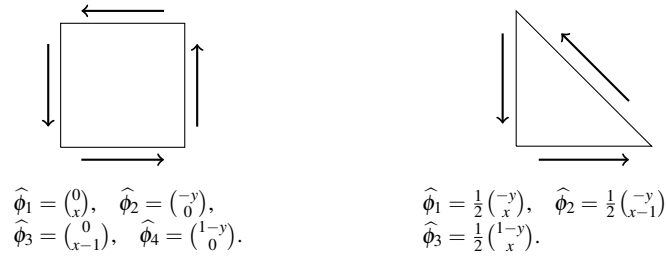

We again define one basis function $\psi_j$ of $Q_h$ for every element $T_k$ by $\psi_j|_{T_k}=\delta_{jk}$.
To any edge $e_i=T_l \cap T_r$, we now associate one single basis function $\phi_i$ defined by 
\begin{align} \label{eq:con}
\phi_i = \widetilde \phi_i + \widetilde \phi_{i+n_e}.
\end{align}
Using the construction of $\widetilde \phi_i$, one can give an equivalent definition of $\phi_i$ via 
\begin{equation} \label{eq:piola2}
\phi_i|_T = B_T^{-\top} \widehat \phi_{\alpha}, \qquad T \cap e_i \ne \emptyset,
\end{equation}
with basis functions $\widehat \phi_\alpha=\widehat \phi_{\alpha,0} + \widehat \phi_{\alpha,1}$ defined on the reference element in  Figure~\ref{fig:basis2}. 
Let us note that the space $V_h$ coincides with the Nedelec space of lowest order \cite{BoffiBrezziFortin13}.
Any function $\E_h \in V_h$ and $\H_h \in Q_h$ can then be expanded as 
\begin{align} \label{eq:exp}
\E_h = \sum\nolimits_i \e_i \phi_i \qquad \text{and} \qquad \H_h = \sum\nolimits_j \h_j \psi_j. 
\end{align}
As a consequence of \eqref{eq:con}, any $\E_h \in V_h$ can be interpreted as function $\widetilde \E_h \in \widetilde V_h$ by
\begin{equation}
\E_h 
= \sum\nolimits_i \e_i \phi_i 
= \sum\nolimits_i \e_i (\widetilde \phi_{i} + \widetilde \phi_{i+n_e}) 
= \sum\nolimits_i \e_i \widetilde \phi_i + \e_i \widetilde \phi_{i+n_e} 
=  \widetilde \E_h.
\end{equation}
The coordinates of $\widetilde \E_h$ and $\E_h$ are thus simply connected by $\widetilde \e_i =\widetilde \e_{i+n_e}=\e_i$.
Vice versa, we can associate to any function $\widetilde \E_h \in \widetilde V_h$ a function $\E_h = \Pi_h \widetilde \E_h \in V_h$ by defining its coordinates as $\e_i = \frac{1}{2}(\widetilde \e_i + \widetilde \e_{i+n_e})$. In linear algebra notation, this reads
\begin{equation}
\e = \P\,\widetilde \e  
\end{equation}
with projection matrix $\P$ defined by $\P_{ij}=\frac{1}{2}$ if $j=i$ or $j=i+n_e$, and $\P_{ij}=0$ else. 
%

We now define the system matrices for the system \eqref{eq:sys1}--\eqref{eq:sys2} by
$(\M_\mu)_{ij}=(\mu \psi_j,\psi_i)$, $\C_{ij}=\C'_{ji}=(\curl \phi_j,\psi_i)$, and
$\M_\epsilon^{-1} = \P\,\widetilde \M_\epsilon^{-1} \P^\top$,where $\widetilde \M_\epsilon$ is defined as in the previous sections. This construction has the folllowing properties.
\begin{lemma} \label{lem:3}
Let $\M_\mu$, $\C$, $\C'$, and $\M_\epsilon^{-1}$ be defined as above, and set $\M_\epsilon = (\M_\epsilon^{-1})^{-1}$.
Then the conditions (i)--(iii) are satisfied. 
\end{lemma}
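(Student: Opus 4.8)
The plan is to verify conditions (i)--(iii) one at a time, deriving them from the corresponding properties of the enriched method established in Lemma~\ref{lem:1}. Condition (i) is immediate from the definitions: since $\C_{ij}=(\curl\phi_j,\psi_i)$ and $\C'_{ji}=(\curl\phi_j,\psi_i)$, we have $\C'=\C^\top$ by inspection. Condition (ii) for $\M_\mu$ is also immediate, since $(\M_\mu)_{ij}=(\mu\psi_j,\psi_i)$ is a genuine Gram matrix of linearly independent functions with respect to the (exact) $\mu$-weighted inner product, hence symmetric and positive definite; here I would note that $\M_\mu$ actually coincides with $\widetilde\M_\mu$ because $Q_h=\widetilde Q_h$ with the same basis, so positive definiteness is inherited from Lemma~\ref{lem:1}.

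The substantive point is condition (ii) for $\M_\epsilon$ together with condition (iii). First I would establish that $\M_\epsilon^{-1}=\P\,\widetilde\M_\epsilon^{-1}\P^\top$ is symmetric positive \emph{semi}-definite, which is clear since $\widetilde\M_\epsilon^{-1}$ is SPD (Lemma~\ref{lem:1}) and conjugation by $\P^\top$ preserves symmetry and non-negativity. The key step is to show it is in fact positive definite, i.e.\ that $\P^\top$ has trivial kernel, equivalently that $\P$ is surjective onto $\RR^{n_e}$. This follows directly from the explicit form of $\P$: the $i$-th row has entries $\tfrac12$ in columns $i$ and $i+n_e$, so $\P\,\mathbf{u}=\mathbf{v}$ is solved for any $\mathbf{v}$ by, e.g., $\widetilde u_i=\widetilde u_{i+n_e}=v_i$; hence $\P$ is onto, $\P^\top$ is injective, and $\mathbf{x}^\top\P\,\widetilde\M_\epsilon^{-1}\P^\top\mathbf{x}=(\P^\top\mathbf{x})^\top\widetilde\M_\epsilon^{-1}(\P^\top\mathbf{x})>0$ for $\mathbf{x}\neq 0$. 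Therefore $\M_\epsilon^{-1}$ is SPD, its inverse $\M_\epsilon$ exists and is SPD, completing condition (ii).

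For condition (iii) I would argue that $\M_\epsilon^{-1}$ can be applied efficiently because each of its three factors can: $\P$ and $\P^\top$ are sparse (two nonzeros per row/column) and $\widetilde\M_\epsilon^{-1}$ is applied efficiently by Lemma~\ref{lem:1}(iii), since $\widetilde\M_\epsilon$ is block-diagonal. One should observe that we never need to assemble or invert $\M_\epsilon$ itself: in the time-stepping scheme \eqref{eq:sys1}--\eqref{eq:sys2} only the action $\M_\epsilon^{-1}(\C'\h)$ is required, which is computed as $\P\bigl(\widetilde\M_\epsilon^{-1}(\P^\top\C'\h)\bigr)$. Similarly $\M_\mu^{-1}$ is applied efficiently because $\M_\mu=\widetilde\M_\mu$ is diagonal. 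This gives (iii).

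The main obstacle is really just the positive-definiteness of $\M_\epsilon^{-1}$, which hinges on the surjectivity of $\P$; everything else is bookkeeping. A secondary subtlety worth a sentence is that $\M_\epsilon$ as defined is \emph{not} the Galerkin mass matrix $(\epsilon\phi_j,\phi_i)_h$ on $V_h$ — it is a different, cheaper surrogate obtained by projection from the enriched space — so one should remark that conditions (i)--(iii) are exactly the properties needed for stability and efficiency of the scheme, and that the consistency/accuracy of this surrogate is what the companion analysis (and Lemma~\ref{lem:2}) addresses, rather than re-deriving it here.
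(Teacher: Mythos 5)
Your proof is correct and follows essentially the same route as the paper: condition (i) by construction, $\M_\mu=\widetilde\M_\mu$ diagonal and positive definite, and $\M_\epsilon^{-1}=\P\,\widetilde\M_\epsilon^{-1}\P^\top$ symmetric, sparse, and positive definite because $\widetilde\M_\epsilon^{-1}$ is block-diagonal SPD and $\P$ has full rank. You merely supply the details (explicit right inverse of $\P$, hence injectivity of $\P^\top$) that the paper's one-line argument leaves implicit.
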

\begin{proof}
Condition (i) follows by construction. 
The matrix $\M_\mu$ is diagonal and positive definite and therefore $\M_\mu^{-1}$ has the same properties. 
This verifies (ii) and (iii) for the matrix $\M_\mu$. 
Since $\P$ is sparse and has fully rank and $\widetilde \M_\epsilon^{-1}$ is block diagonal, symmetric, and  positive definite, one can see that also $\M_\epsilon^{-1}$ is sparse, symmetric, and positive-definite. 
This verifies conditions (ii) and (iii) for $\M_\epsilon$.
\qed
\end{proof}

In the following, we investigate more closely the relation of the system \eqref{eq:sys1}--\eqref{eq:sys2} with matrices as defined above and the system \eqref{eq:sys1w}--\eqref{eq:sys2w} discussed in the previous section.
We start with an auxiliary result. 
 \begin{lemma} \label{lem:4}
Let $\C$, $\P$, and $\widetilde \C$ be defined as above. 
Then one has $\widetilde \C =\C \P$. 
\end{lemma}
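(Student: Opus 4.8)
The plan is to verify the identity $\widetilde\C = \C\P$ entrywise by unwinding the definitions of the three matrices. Recall $(\widetilde\C)_{ij} = (\curl\widetilde\phi_j, \widetilde\psi_i)$, that $\C_{ij} = (\curl\phi_j, \psi_i)$, that $\psi_i = \widetilde\psi_i$, and that $\P_{kj} = \tfrac12$ exactly when $k = j$ or $k = j + n_e$ (and $0$ otherwise). So the $(i,j)$-entry of $\C\P$ is $\sum_k \C_{ik}\P_{kj} = \tfrac12\,\C_{ij} + \tfrac12\,\C_{i,j+n_e}$ for $j \le n_e$, which equals $\tfrac12\big((\curl\phi_j,\psi_i) + (\curl\phi_{j+n_e},\psi_i)\big)$ — but wait, $\C$ as defined in the paper is $n_T \times n_e$, not $n_T \times 2n_e$, so I should be careful: the matrix multiplication $\C\P$ only makes sense if one reads $\P$ as the $n_e \times 2n_e$ matrix with $\P_{jk} = \tfrac12$ for $k \in \{j, j+n_e\}$. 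Then $(\C\P)_{ij} = \sum_{k=1}^{2n_e}\C_{ik}\P_{?}$ — I will instead compute $(\widetilde\C)_{ij}$ directly and match it against $(\C\P)_{ij}$ using the correct dimensions, i.e. treating $\P$ as the $n_e\times 2n_e$ averaging matrix so that $\widetilde\C \in \RR^{n_T\times 2n_e}$ and $\C\P \in \RR^{n_T\times 2n_e}$ both have $2n_e$ columns, with $\C\in\RR^{n_T\times n_e}$.

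The key computation is then the following. For a column index $j$ with $1 \le j \le n_e$, the $j$-th column of $\widetilde\C$ has entries $(\curl\widetilde\phi_j,\psi_i)$, while the $(j+n_e)$-th column has entries $(\curl\widetilde\phi_{j+n_e},\psi_i)$. On the other side, since $\P$ in this reading has its $j$-th and $(j+n_e)$-th columns both equal to $\tfrac12 e_j$, the matrix $\C\P$ has $j$-th column $\tfrac12\C e_j$ and $(j+n_e)$-th column $\tfrac12 \C e_j$, i.e. $(\C\P)_{ij} = (\C\P)_{i,j+n_e} = \tfrac12(\curl\phi_j,\psi_i)$. Hence the identity $\widetilde\C = \C\P$ reduces to showing
\begin{equation*}
(\curl\widetilde\phi_j, \psi_i) = (\curl\widetilde\phi_{j+n_e}, \psi_i) = \tfrac12\,(\curl\phi_j,\psi_i)
\qquad \text{for all } i, \text{ all } j \le n_e.
\end{equation*}
By the relation $\phi_j = \widetilde\phi_j + \widetilde\phi_{j+n_e}$ from \eqref{eq:con} and linearity of $\curl$ and of the $L^2$ pairing, $(\curl\phi_j,\psi_i) = (\curl\widetilde\phi_j,\psi_i) + (\curl\widetilde\phi_{j+n_e},\psi_i)$, so it suffices to show the single equality $(\curl\widetilde\phi_j,\psi_i) = (\curl\widetilde\phi_{j+n_e},\psi_i)$, after which the displayed chain follows immediately.

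For this last equality I would work element by element. Since $\psi_i$ is the characteristic-type function supported on $T_i$, we have $(\curl\widetilde\phi_j,\psi_i) = \int_{T_i}\curl\widetilde\phi_j\,dx$. Using the Piola-type transformation \eqref{eq:piola}, $\widetilde\phi_{j+\ell n_e}|_T = B_T^{-\top}\widehat\phi_{\alpha,\gamma}$, and the covariant-Piola identity $\curl(B_T^{-\top}\widehat\phi) = \det(B_T)^{-1}\,\widehat\curl\,\widehat\phi \circ F_T^{-1}$, the integral becomes $\int_{\widehat T}\widehat\curl\,\widehat\phi_{\alpha,\gamma}\,d\widehat x$, with the Jacobian factors cancelling. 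So everything reduces to the reference-element claim that $\int_{\widehat T}\widehat\curl\,\widehat\phi_{\alpha,0}\,d\widehat x = \int_{\widehat T}\widehat\curl\,\widehat\phi_{\alpha,1}\,d\widehat x$ for each reference edge $\alpha$, for both the reference triangle and the reference square. This I would verify directly from the explicit formulas in Figure~\ref{fig:basis}: compute $\widehat\curl$ (the scalar curl $\partial_x(\cdot)_2 - \partial_y(\cdot)_1$) of each listed $\widehat\phi_{\alpha,\gamma}$ and integrate over $\widehat T$; a quick inspection of a representative pair (e.g. $\widehat\phi_{1,0}$ and $\widehat\phi_{1,1}$ on the square) shows their scalar curls integrate to the same value, which equals half the scalar curl integral of $\widehat\phi_\alpha = \widehat\phi_{\alpha,0} + \widehat\phi_{\alpha,1}$ from Figure~\ref{fig:basis2}. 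The main obstacle is bookkeeping rather than conceptual: one must correctly track the index correspondence $(\alpha,\gamma) \leftrightarrow (j,\ell)$ including the orientation-dependent choice of $\gamma$, and make sure the orientation sign of the edge $e_i$ enters $\widetilde\phi_j$ and $\widetilde\phi_{j+n_e}$ the same way so that the two contributions genuinely coincide; once the reference-element identity is checked for all edges of both reference shapes, the global identity assembles by summation over elements.
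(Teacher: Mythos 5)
Your proposal is correct and is essentially the detailed version of the paper's one-line proof (``follows directly from the definition of the basis functions''): the identity reduces, exactly as you argue, to $(\curl\widetilde\phi_j,\psi_i)=(\curl\widetilde\phi_{j+n_e},\psi_i)$, which holds because the scalar curls of $\widehat\phi_{\alpha,0}$ and $\widehat\phi_{\alpha,1}$ in Figure~\ref{fig:basis} coincide (each is identically $\tfrac12$, hence equal to $\tfrac12$ times the curl of $\widehat\phi_\alpha$) and the Piola/Jacobian factors are the same for both functions on a given element. The only remaining work is the routine one of carrying out the reference-element check for every edge pair of both reference shapes rather than a representative one, and your observation that the orientation-dependent choice of $\gamma$ is harmless is exactly right, since both values of $\gamma$ yield the same reference curl.
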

\begin{proof}
The result follows directly from the definition of the basis functions.
\qed
\end{proof}

As a direct consequence, we can reveal the following close connection between the methods \eqref{eq:sys1}--\eqref{eq:sys2} and \eqref{eq:sys1w}--\eqref{eq:sys2w} discussed in the preceding sections.
\begin{lemma} \label{lem:5}
Let $\widetilde \e(t)$, $\widetilde \h(t)$ be a solution of \eqref{eq:sys1w}--\eqref{eq:sys2w}. 
Then $\e(t) = \P\,\widetilde\e(t)$, $\h(t) = \widetilde \h(t)$ solves \eqref{eq:sys1}--\eqref{eq:sys2} 
with matrices $\M_\epsilon$, $\M_\mu$, and $\C$ as defined above.
\end{lemma}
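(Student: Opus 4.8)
The plan is to verify directly that the pair $\e(t)=\P\,\widetilde\e(t)$, $\h(t)=\widetilde\h(t)$ satisfies the two equations \eqref{eq:sys1}--\eqref{eq:sys2} with the matrices $\M_\epsilon$, $\M_\mu$, $\C$, $\C'$ defined above, given that $\widetilde\e,\widetilde\h$ solve \eqref{eq:sys1w}--\eqref{eq:sys2w}. The two equations are handled separately, and the key algebraic facts are Lemma~\ref{lem:4} ($\widetilde\C=\C\P$) together with the definitions $\M_\epsilon^{-1}=\P\,\widetilde\M_\epsilon^{-1}\P^\top$, $\M_\mu=\widetilde\M_\mu$, $\C'=\C^\top$, and $\h=\widetilde\h$.

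For the magnetic equation \eqref{eq:sys2}, I would start from \eqref{eq:sys2w}, namely $\widetilde\M_\mu\dt\widetilde\h=-\widetilde\C\,\widetilde\e$. Substituting $\widetilde\M_\mu=\M_\mu$, $\widetilde\h=\h$, and $\widetilde\C=\C\P$ (Lemma~\ref{lem:4}) gives $\M_\mu\dt\h=-\C\P\,\widetilde\e=-\C\,\e$, which is exactly \eqref{eq:sys2}. This step is essentially immediate once Lemma~\ref{lem:4} is invoked.

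The electric equation \eqref{eq:sys1} is the one requiring a small computation. Starting from \eqref{eq:sys1w}, $\widetilde\M_\epsilon\dt\widetilde\e=\widetilde\C^\top\widetilde\h$, one cannot simply multiply by $\P$ on the left, since $\P\widetilde\M_\epsilon$ is not obviously related to $\M_\epsilon\P$. Instead I would first rewrite \eqref{eq:sys1w} as $\dt\widetilde\e=\widetilde\M_\epsilon^{-1}\widetilde\C^\top\widetilde\h$, then multiply on the left by $\P$ to obtain $\dt\e=\P\,\dt\widetilde\e=\P\,\widetilde\M_\epsilon^{-1}\widetilde\C^\top\widetilde\h$. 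Now use $\widetilde\C^\top=(\C\P)^\top=\P^\top\C^\top$ and $\h=\widetilde\h$ to get $\dt\e=\P\,\widetilde\M_\epsilon^{-1}\P^\top\C^\top\h=\M_\epsilon^{-1}\C^\top\h=\M_\epsilon^{-1}\C'\,\h$. Multiplying through by $\M_\epsilon$ yields $\M_\epsilon\dt\e=\C'\h$, which is \eqref{eq:sys1}. One should also note that $\M_\epsilon^{-1}$ is invertible by Lemma~\ref{lem:3}, so $\M_\epsilon=(\M_\epsilon^{-1})^{-1}$ is well defined and the last multiplication is legitimate.

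The main (mild) obstacle is simply organizing the electric equation correctly: one must pass through the inverse mass matrix rather than the mass matrix itself, because the projection $\P$ and the block-diagonal $\widetilde\M_\epsilon$ interact naturally only in the combination $\P\,\widetilde\M_\epsilon^{-1}\P^\top$ that defines $\M_\epsilon^{-1}$. Apart from that, everything is a direct substitution using Lemma~\ref{lem:4} and the definitions, so the proof is short.
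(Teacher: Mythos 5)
Your proposal is correct and follows essentially the same route as the paper's proof: both equations are verified by direct substitution, with the electric equation handled by passing through $\dt\widetilde\e=\widetilde\M_\epsilon^{-1}\widetilde\C^\top\widetilde\h$, applying $\P$, and using $\widetilde\C=\C\P$ from Lemma~\ref{lem:4} to recognize $\P\,\widetilde\M_\epsilon^{-1}\P^\top\C^\top\h=\M_\epsilon^{-1}\C^\top\h$. Your explicit remark about why one must work with the inverse mass matrix rather than $\M_\epsilon$ itself is a helpful clarification of exactly the point the paper's computation relies on.
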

\begin{proof}
From equation \eqref{eq:sys1w}, the definition of $\e$, $\h$, and Lemma~\ref{lem:4}, we deduce that 
\[
\dt \e 
= \P \dt \widetilde \e 
= \P \widetilde\M_\epsilon^{-1} \widetilde \C^\top \widetilde \h 
= \P \widetilde\M_\epsilon^{-1} \P^\top \C^\top \widetilde \h 
= \M_\epsilon^{-1} \C^\top \h.
\]
This verifies the validity of equation \eqref{eq:sys1}. 
Using equation \eqref{eq:sys2w},
we obtain
\[
\M_\mu \dt \h 
= \widetilde \M_\mu \dt \widetilde \h  
= -\widetilde \C\;\widetilde \e 
= -\C \P\;\widetilde \e = - \C\,\e,
\]
which verifies the validity of equation \eqref{eq:sys2}. Finally, using the discrete stability of the projection completes the proof. \qed
\end{proof}

\begin{remark}
The vectors $\e(t)$, $\h(t)$ computed via \eqref{eq:sys1}--\eqref{eq:sys2} with the above choice of matrices 
correspond to finite element approximations $\E_h(t) \in V_h$, $\H_h(t) \in Q_h$. 
Therefore, the procedure described above can be interpreted as a mixed finite element method with mass-lumping based on the approximation spaces $V_h$ and $Q_h$.
\end{remark}

As an immediate consequence of Lemma~\ref{lem:5} and the approximation result of Lemma~\ref{lem:2}, we now obtain the following assertions.
\begin{lemma} \label{lem:6}
Let $\e(t)$, $\h(t)$ denote the solutions of \eqref{eq:sys1}--\eqref{eq:sys2} with appropriate initial conditions 
and set $\E_h(t) = \sum_i \e_i(t) \phi_i$, $\H_h(t) = \sum_j \h_j(t) \psi_j$. 
Then 
\begin{align*}
\|\E_h(t) - \E(t)\| + \|\H_h(t) - \H(t)\| \le C h
\end{align*}
for all $0 < t \le T$. In addition, $\|\pi_h^0 \H(t) - \H_h(t)\| \le C h^2$ where $\pi_h^0 \H$ denotes the piecewise constant approximation of $\H$ on the mesh $\T_h$. 
\end{lemma}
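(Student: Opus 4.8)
The plan is to deduce Lemma~\ref{lem:6} directly from the already-established correspondence in Lemma~\ref{lem:5} together with the error estimate of Lemma~\ref{lem:2}. First I would note that, by Lemma~\ref{lem:5}, if we choose the initial data so that $\widetilde\e(0)$, $\widetilde\h(0)$ are the appropriate initial values from Lemma~\ref{lem:2} and set $\e(0)=\P\,\widetilde\e(0)$, $\h(0)=\widetilde\h(0)$, then the solution of \eqref{eq:sys1}--\eqref{eq:sys2} satisfies $\e(t)=\P\,\widetilde\e(t)$ and $\h(t)=\widetilde\h(t)$ for all $t$. (One must check that the natural choice of initial conditions for the reduced scheme is exactly of this form; since $\e(0)$ is obtained by projecting the edge-interpolant of $\E(0)$, this is immediate.) Consequently $\H_h(t)=\sum_j \h_j(t)\psi_j = \sum_j \widetilde\h_j(t)\widetilde\psi_j = \widetilde\H_h(t)$, so the bounds $\|\widetilde\H_h(t)-\H(t)\|\le Ch$ and $\|\widetilde\H_h(t)-\pi_h^0\H(t)\|\le Ch^2$ transfer verbatim to $\H_h$.

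For the electric field the key observation is the identity $\E_h=\widetilde\E_h$ established in the text just before the definition of $\Pi_h$: because $\phi_i=\widetilde\phi_i+\widetilde\phi_{i+n_e}$ and the coordinates are linked by $\widetilde\e_i=\widetilde\e_{i+n_e}=\e_i$, the function $\sum_i \e_i\phi_i$ in $V_h$ represents the \emph{same} element of $L^2(\Omega)^2$ as $\sum_i(\widetilde\e_i\widetilde\phi_i+\widetilde\e_{i+n_e}\widetilde\phi_{i+n_e})$ in $\widetilde V_h$. Here I would use that $\e=\P\,\widetilde\e$ combined with the structure of $\P$ forces $\widetilde\e_i=\widetilde\e_{i+n_e}$ along the trajectory — this holds because the corresponding property holds for the chosen initial data and is preserved by the block-diagonal structure of $\widetilde\M_\epsilon^{-1}$, or, more cleanly, because $\Pi_h\widetilde\E_h=\widetilde\E_h$ already whenever $\widetilde\E_h$ lies in the relevant symmetric subspace, which is where the solution lives. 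Granting this, $\E_h(t)=\widetilde\E_h(t)$ as functions, and the estimate $\|\widetilde\E_h(t)-\E(t)\|\le Ch$ from Lemma~\ref{lem:2} gives $\|\E_h(t)-\E(t)\|\le Ch$ directly. Adding the two bounds yields the first claim, and the $h^2$ bound for $\H_h$ has already been handled.

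The main obstacle I anticipate is precisely the bookkeeping in the previous paragraph: one must make sure that the solution of the enriched scheme \eqref{eq:sys1w}--\eqref{eq:sys2w} actually stays in the subspace $\{\widetilde\e:\widetilde\e_i=\widetilde\e_{i+n_e}\}$, equivalently that $\P^\top\P$ acts as the identity on it and that $\widetilde\M_\epsilon^{-1}$ leaves it invariant, so that the passage $\E_h=\widetilde\E_h$ is legitimate for all $t$ and not merely at $t=0$. This is where the block-diagonal structure of $\widetilde\M_\epsilon$ from Lemma~\ref{lem:1} and the explicit form of the reference basis functions $\widehat\phi_{\alpha,0}+\widehat\phi_{\alpha,1}=\widehat\phi_\alpha$ enter; alternatively one sidesteps the issue entirely by observing that $\|\E_h-\E\|=\|\Pi_h\widetilde\E_h-\E\|\le\|\Pi_h\widetilde\E_h-\widetilde\E_h\|+\|\widetilde\E_h-\E\|$ and that the first term vanishes on the solution subspace. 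Everything else is a one-line substitution, so I would keep the proof short: invoke Lemma~\ref{lem:5}, identify $\H_h=\widetilde\H_h$ and $\E_h=\widetilde\E_h$, and quote Lemma~\ref{lem:2}.
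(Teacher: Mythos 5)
Your overall strategy --- transfer the estimates of Lemma~\ref{lem:2} through the correspondence of Lemma~\ref{lem:5} --- is exactly the intended one, and your treatment of the magnetic field is correct: since $\h(t)=\widetilde\h(t)$ and $Q_h=\widetilde Q_h$, the identities $\H_h(t)=\widetilde\H_h(t)$ hold as functions and both $\H$-estimates carry over verbatim.

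The electric-field part, however, rests on a false claim. You assert that the solution of the enriched system \eqref{eq:sys1w}--\eqref{eq:sys2w} stays in the symmetric subspace $\{\widetilde\e:\widetilde\e_i=\widetilde\e_{i+n_e}\}$, so that $\E_h(t)=\Pi_h\widetilde\E_h(t)=\widetilde\E_h(t)$ as functions. This subspace is \emph{not} invariant under the enriched dynamics: while $\widetilde\C^\top\widetilde\h=\P^\top\C^\top\widetilde\h$ does land in it, the subsequent application of $\widetilde\M_\epsilon^{-1}$ destroys the symmetry, because its blocks couple the degrees of freedom of \emph{different} edges meeting at a common vertex (the Wheeler--Yotov structure) and are not scalar multiples of the identity on each pair $(\widetilde\phi_i,\widetilde\phi_{i+n_e})$. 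If the trajectory did remain in that subspace, the enriched and reduced schemes would coincide and the entire construction of Section~\ref{sec:3} would be vacuous. Your proposed fallback, bounding $\|\Pi_h\widetilde\E_h-\widetilde\E_h\|$ by noting it ``vanishes on the solution subspace,'' fails for the same reason. The correct repair is a triangle inequality through an interpolant: with $I_h\E\in V_h$ the Nedelec interpolant (viewed also as an element of $\widetilde V_h$ with equal paired coordinates, so that $\Pi_h I_h\E=I_h\E$), write $\|\E_h-\E\|\le\|\Pi_h(\widetilde\E_h-I_h\E)\|+\|I_h\E-\E\|\le C\|\widetilde\E_h-I_h\E\|+Ch\le C(\|\widetilde\E_h-\E\|+\|\E-I_h\E\|)+Ch\le Ch$, using the uniform $L^2$-stability of $\Pi_h$ --- precisely the ``discrete stability of the projection'' invoked at the end of the proof of Lemma~\ref{lem:5} --- together with Lemma~\ref{lem:2} and the standard interpolation estimate. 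With this substitution your argument is complete.
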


By some elementary computations, one can verify the following observation.
\begin{lemma} \label{lem:7}
Let $\T_h$ be a uniform mesh consisting of orthogonal quadrilaterals $T$ of the same size.
Furthermore, let $\epsilon$ and $\mu$ be positive constants. 
Then the matrices $\M_\epsilon$, $\M_\mu$, and $\C$, defined above coincide with those 
obtained by the finite difference approximation on staggered grids; see \cite{Cohen02} for the two dimensional version. 
\end{lemma}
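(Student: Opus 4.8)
The plan is to compute all three matrices explicitly on the reference square, push them forward by the affine (Piola) maps, and then assemble over the uniform grid, comparing term by term with the stencils of the two‑dimensional Yee scheme recorded in \cite{Cohen02}. First I would fix the uniform mesh: each element is a translate of $[0,h_x]\times[0,h_y]$, so $B_T = \mathrm{diag}(h_x,h_y)$, $|T| = h_x h_y$, and $B_T^{-\top} = \mathrm{diag}(1/h_x,1/h_y)$. With $\mu$ constant, $(\M_\mu)_{jj} = \mu\,|T_j| = \mu\,h_x h_y$, i.e.\ $\M_\mu = \mu\,h_x h_y\,I$, which is exactly the (diagonal) update coefficient for $H_z$ in the Yee scheme after dividing the discrete Faraday law by the cell area.

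Next I would treat $\C$. Each row corresponds to an element $T_j$ and each column to an edge $e_i$; by \eqref{eq:piola2} the basis function $\phi_i$ restricted to $T_j$ is $B_{T_j}^{-\top}\widehat\phi_\alpha$, and $\curl$ of the reference functions in Figure~\ref{fig:basis2} is constant on $\widehat T$ (indeed $\curl\widehat\phi_\alpha = \pm 1$ for the square), so $(\curl\phi_i,\psi_j) = \int_{T_j}\curl\phi_i = \pm 1$ with the sign given by the edge orientation. Hence $\C$ is precisely the (signed) discrete curl/incidence operator that appears in the Yee discrete Faraday law, acting on the four edge values of an element to produce the circulation $E_x(\text{bottom}) - E_x(\text{top}) + E_y(\text{right}) - E_y(\text{left})$, up to the orientation bookkeeping fixed by Figure~\ref{fig:basis2}; $\C' = \C^\top$ then reproduces the discrete curl in the Amp\`ere law.

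The substantive computation is $\M_\epsilon$, and here I would proceed through $\widetilde\M_\epsilon$ and the projection $\P$, since $\M_\epsilon^{-1} = \P\,\widetilde\M_\epsilon^{-1}\P^\top$. On the reference square the eight functions $\widehat\phi_{\alpha,\gamma}$ of Figure~\ref{fig:basis} are evaluated at the four vertex quadrature points $\widehat x_l$ with weights $w_l = 1/4$; plugging these into $(\epsilon\widehat\phi_{\beta,\delta},\widehat\phi_{\alpha,\gamma})_{h,T} = |T|\sum_l \epsilon\,\widehat\phi_{\beta,\delta}(\widehat x_l)\cdot\widehat\phi_{\alpha,\gamma}(\widehat x_l)/4$ and mapping by $B_T^{-\top}$ gives a block‑diagonal $\widetilde\M_\epsilon$ whose diagonal blocks I would show equal $\tfrac14\epsilon\,h_x h_y$ times simple factors involving $h_x^{-2}$ or $h_y^{-2}$ — the key point (already guaranteed by Lemma~\ref{lem:1}) being that the vertex quadrature kills all cross terms between the two functions sharing an edge and between different edges, because each $\widehat\phi_{\alpha,\gamma}$ vanishes at all but one quadrature point. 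Inverting these tiny blocks and conjugating by $\P$ (which averages the two coordinates $\widetilde\e_i,\widetilde\e_{i+n_e}$ with weight $\tfrac12$) collapses each $\widetilde\M_\epsilon^{-1}$ block to a scalar; after assembly, $\M_\epsilon$ turns out to be diagonal with $(\M_\epsilon)_{ii}$ equal to $\epsilon$ times the area of the dual cell around edge $e_i$, which is exactly the coefficient multiplying $\dt E$ in the Yee update on a staggered grid. I would then simply match signs, indices, and the factor $h_x h_y$ against the stencil in \cite{Cohen02} to conclude.

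The main obstacle is bookkeeping rather than depth: keeping the edge orientations, the correspondence $(\alpha,\gamma)\leftrightarrow(i,\ell)$, and the global numbering of edges versus the $i,i+1/2$ indexing of the Yee grid consistent through the Piola transform, so that the signs in $\C$ and the identification of dual‑cell areas in $\M_\epsilon$ come out matching the finite‑difference stencil exactly. It is also worth checking that the boundary condition $\E\times\n=0$ is imposed compatibly — interior edges only — so that the matrix sizes agree; but this is the "elementary computations" the statement already advertises, and no new idea beyond careful reference‑element algebra is needed. \qed
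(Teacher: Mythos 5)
Your plan is correct and is exactly the ``elementary computation'' the paper alludes to --- the paper prints no proof of Lemma~\ref{lem:7} at all, so explicit evaluation on the reference element, Piola push-forward with $B_T=\mathrm{diag}(h_x,h_y)$, and assembly over the uniform grid is precisely the intended argument. Two small points to watch when you carry it out: the vanishing of cross terms between degrees of freedom of \emph{different} edges meeting at a quadrature vertex is not a consequence of the one-point support (both functions are nonzero at the shared vertex) but of the orthogonality of their tangent directions on a rectangular element together with scalar $\epsilon$; and with edge-integral unknowns the diagonal entry of $\M_\epsilon$ comes out as $\epsilon h_x/h_y$ (resp.\ $\epsilon h_y/h_x$) rather than literally the dual-cell area $\epsilon h_x h_y$, which is the point-value normalization --- either way it matches the staggered-grid stencil after the scaling you already plan to track.
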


The method proposed in this section therefore can be understood as a variational generalization of the Yee scheme. 
In the two dimensional setting, one degree of freedom $\e_i$ is required for every edge, and one value $\h_j$ for every element.

\section{Numerical validation}\label{sec:4}
Consider the domain $\Omega = (-1,1)^2\setminus \{(x,y) : (x-0.6)^2+y^2 \le 0.25^2\}$,
which is split by an interior boundary into $\Omega = \Omega_1\cup\Omega_2$; see Figure~\ref{fig:2} for a sketch. We set $\epsilon=1$ on $\Omega_1$, $\epsilon=3$ on $\Omega_2$ and $\mu=1$ on $\Omega$, and consider a plane wave that enters the domain from the left boundary. The wave gets slowed down and refracted, when entering the domain $\Omega_2$, and reflected at the circle $\partial\Omega_0$, where we enforce a perfect electric boundary conditions. Convergence rates for the numerical solution are depicted in Table~\ref{tab:1} and a few snapshots of the solution are depicted Figure~\ref{fig:3}.
\begin{figure}[ht!]
\centering
\begin{minipage}{0.25\textwidth}\centering
\vskip-1em
\hspace*{-2em}
\begin{tikzpicture}[scale=1.2]
\draw[blue,line width=0.5mm] (-1,-1) -- (-1, 1); 
\draw[blue,line width=0.5mm] ( 1,-1) -- ( 1, 1);
\draw[blue, line width=0.5mm]  ( 1, 1) -- (-1, 1) node[above,midway] {\small $\partial\Omega_1$};
\draw[blue, line width=0.5mm]  (-1,-1) -- ( 1,-1);
\draw[blue, line width=0.5mm, dashed]  (-0.5,-1) -- ( 0, 1);
\draw[red,line width=0.5mm] (0.6, 0) circle (2.5mm) node[above,yshift=0.3cm] {\small $\partial\Omega_0$};;
\node[black] at (-0.5,0.5) {$\Omega_1$};
\node[black] at (0.1,-0.5) {$\Omega_2$};
\draw[] (-1,-1) node[below] {\tiny $(-1,-1)$};
\draw[] (1,-1) node[below] {\tiny $(1,-1)$};
\draw[] (-1,1) node[above] {\tiny $(-1,1)$};
\draw[] (1,1) node[above] {\tiny $(1,1)$};
\end{tikzpicture}
\vskip-1em
\caption{Geometry.\label{fig:2}}
\end{minipage}
\hfil
\begin{minipage}{0.6\textwidth}\centering
\centering
\vskip1em
\begin{tabular}{c||c|c|c||c|c} 
$h$ & DOF & $\tnorm \E_{h} - \pi_{h}\E_{h^*}\tnorm$ & eoc & $\tnorm \pi_h^0(\H_{h} - \pi_{h} \H_{h^*})\tnorm$ & eoc \\
\hline
\hline
\rule{0pt}{2.1ex}
$2^{-3}$ & 2246 & $0.158291$ & ---    &  $0.242490$ & ---    \\
$2^{-4}$ & 8884 & $0.057465$ & $1.46$ &  $0.069676$ & $1.80$ \\
$2^{-5}$ & 35368 & $0.025145$ & $1.19$ &  $0.017157$ & $2.02$ \\
$2^{-6}$ & 141136 & $0.011835$ & $1.08$ &  $0.004064$ & $2.07$
\end{tabular}
\caption{Errors and estimated order of convergence (eoc) with respect to a fine solution $(\E_{h^*},\H_{h^*})$ for $h^*=2^{-8}$. The total number of degrees of freedom (DOF) is also given.\label{tab:1}} 

\medskip
\end{minipage}
\end{figure}

\vspace{-3em}

\begin{figure}[ht!]
\begin{center}
\includegraphics[width=.17\textwidth]{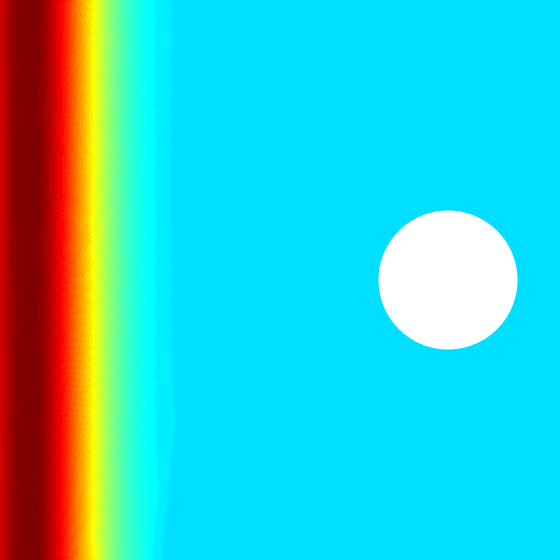}
\hskip2ex
\includegraphics[width=.17\textwidth]{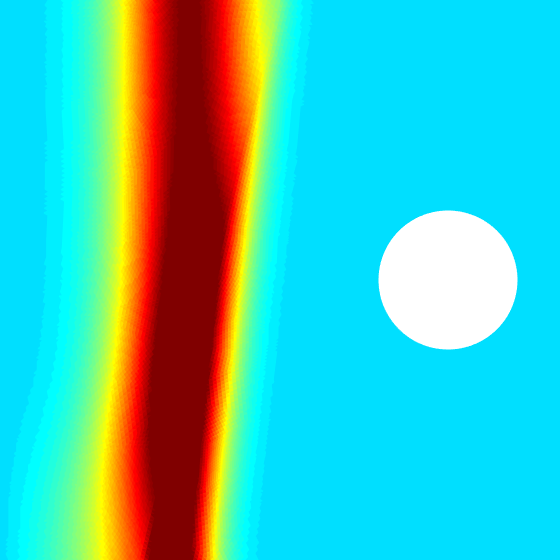} 
\hskip2ex
\includegraphics[width=.17\textwidth]{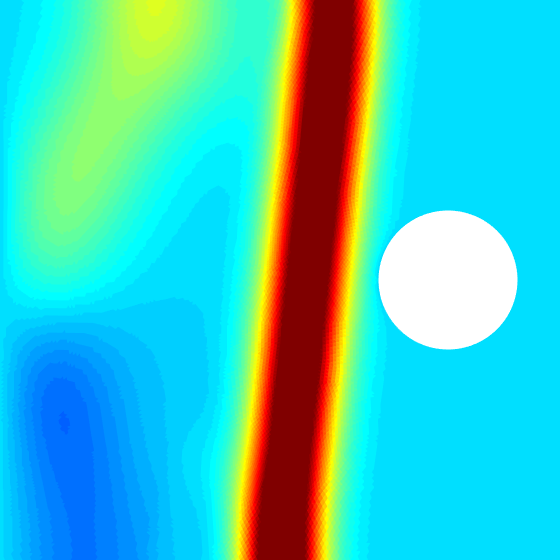}
\hskip2ex
\includegraphics[width=.17\textwidth]{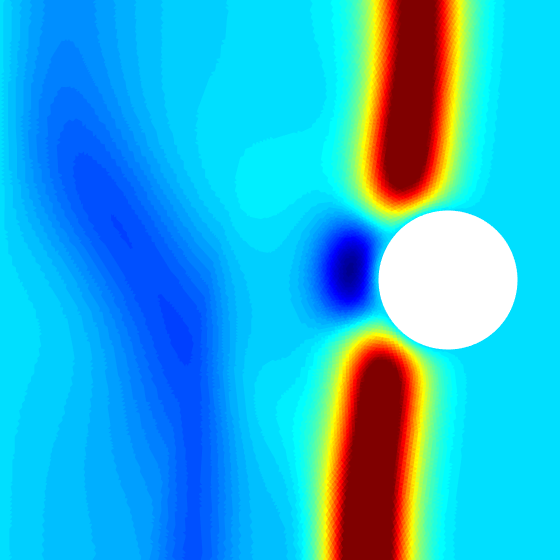}
\hskip2ex
\includegraphics[width=.17\textwidth]{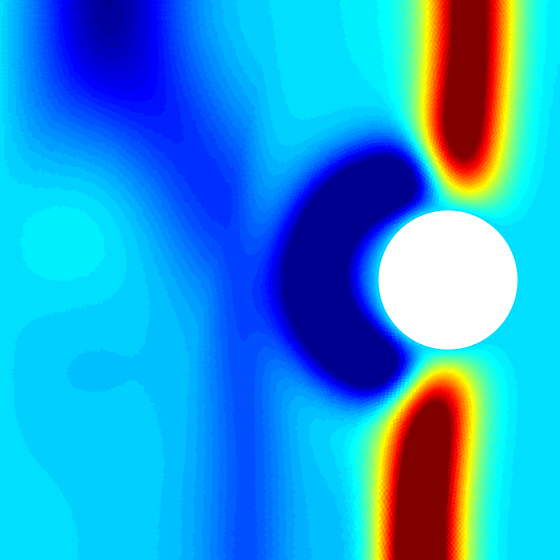} \\[1em]

\caption{Snapshots of the post-processed pressure fields $\widetilde p_h$ for time $t=0.8,1.2,1.6,2.4,2.8$ \label{fig:3}}
\end{center}
\end{figure}

\section{Discussion}\label{sec:5}

Before we conclude, let us briefly discuss an alternative formulation and the extension to three dimensions and higher order approximations. 

\begin{remark}
Eliminating $\h$ from \eqref{eq:sys1}--\eqref{eq:sys2}
leads to a second order equation 
\begin{align} \label{eq:sys3}
\M_\epsilon \dtt \e  + \K_{\mu^{-1}} \e = 0
\end{align}
for the electric field vector $\e$ alone, with $\K_{\mu^{-1}} = C' \M_{\mu}^{-1} \C$. 
A sufficient condition for the stability of the scheme \eqref{eq:sys3} is 
\begin{itemize}\setlength\itemindent{1em}
 \item[(iv)] $\quad \M_\epsilon$ and $\K_{\mu^{-1}}$ are symmetric and positive definite, respectively, semi-definite,
\end{itemize}
and for an efficient numerical integration of \eqref{eq:sys3}, one now requires that 
\begin{itemize}\setlength\itemindent{1em}
 \item[(v)]  $\quad \M_\epsilon^{-1}$ and $\K_{\mu^{-1}}$ can be applied efficiently.
\end{itemize}
The conditions (iv) and (v) can be seen to be a direct consequence of the conditions (i)--(iii), and the special form $\K_{\mu^{-1}} = \C' \M_\mu^{-1} \C$ of the matrix $\K_{\mu^{-1}}$. 
\end{remark}

\begin{remark}
Using the definition of the matrices $\M_\mu$, $\C$, and $\C'=\C^\top$ given in the previous section, 
one can verify that $\K_{\mu^{-1}}$ is given by $(\K_{\mu^{-1}})_{ij} = (\mu^{-1} \curl \phi_j, \curl \phi_i)$. 
Thus $\K_{\mu^{-1}}$ can be assembled without constructing $\C$ or $\M_{\mu}$ explicitly. Moreover, the conditions (iv) and (v) for $\K_{\mu^{-1}}$ are satisfied automatically.
The essential ingredient for a mass-lumped mixed finite element approximation of \eqref{eq:maxwell1}--\eqref{eq:maxwell2} thus is the construction of a positive definite and sparse matrix $\M_\epsilon^{-1}$. 
\end{remark}

\begin{remark}
The construction of the approximation $\M_\epsilon$ discussed in Section~\ref{sec:3} immediately generalizes to three space dimensions. Like in the two dimensional case, two basis functions $\widetilde \phi_i$, $\widetilde \phi_{i+n_e}$ of the space $\widetilde V_h$ are defined for every edge $e_i$ of the mesh and the approximation $(\cdot,\cdot)_h$ is defined via numerical quadrature by the vertex rule. The lumped mass matrix given by $(\widetilde \M_\epsilon)_{ij}=(\epsilon \widetilde \phi_j, \widetilde \phi_i)_h$ then is again block-diagonal. 
As before, the basis functions for the space $V_h$ are then defined by $\phi_i = \widetilde \phi_i + \widetilde \phi_{i+n_e}$ and the inverse mass matrix for the reduced space is again given by $\M_\epsilon^{-1} = \P\,\widetilde\M_\epsilon^{-1} \P^\top$ with projection matrix $\P$ of the same form as in two dimensions.
\end{remark}

\section*{Acknowledgements}
The authors are grateful for support by the German Research Foundation (DFG) via grants TRR~146, TRR~154, and Eg-331/1-1 and through grant GSC~233 of the ``Excellence Initiative'' of the German Federal and State Governments.

\end{document}